\newtheorem{theorem}{Theorem}[section]
\newtheorem{corollary}[theorem]{Corollary}
\theoremstyle{remark}
\newtheorem{definition}{Definition}[section]
\newtheorem{remark}{Remark}[section]
\def\Xint#1{\mathchoice
{\XXint\displaystyle\textstyle{#1}}%
{\XXint\textstyle\scriptstyle{#1}}%
{\XXint\scriptstyle\scriptscriptstyle{#1}}%
{\XXint\scriptscriptstyle\scriptscriptstyle{#1}}%
\!\int}
\def\XXint#1#2#3{{\setbox0=\hbox{$#1{#2#3}{\int}$}
\vcenter{\hbox{$#2#3$}}\kern-.5\wd0}}
\def\dashint{\Xint-}
\newcommand{\com}{\mathbb{C}}
\newcommand{\na}{\mathbb{N}}
\newcommand{\naz}{\mathbb{N}_0}
\newcommand{\re}{\mathbb{R}}
\newcommand{\rn}{{{\mathbb R}^n}}
\newcommand{\rtn}{\re^{2n}}
\newcommand{\abs}[1]{\left\vert #1 \right\vert}
\newcommand{\norm}[2]{\left\|#1\right\|_{#2}}
\newcommand{\sw}{{\mathcal{S}}(\rn)}
\newcommand{\swp}{{\mathcal{S}'}(\rn)}
\newcommand{\fhat}{\widehat{f}}
\newcommand{\ghat}{\widehat{g}}
\newcommand{\dx}{\, dx}
\newcommand{\dy}{\, dy}
\newcommand{\dxi}{\, d\xi}
\newcommand{\deta}{\, d\eta}
\newcommand{\ms}{\mathcal{M}^{\#}}
\newcommand{\hl}{\mathcal{M}}
\newcommand{\hld}{\mathcal{M}_2}
\newcommand{\qc}{{x_Q}}
\newcommand{\expo}{\varepsilon}
\newcommand{\qtil}{{\tilde{Q}}}
\newcommand{\rtil}{{\tilde{R}}}
\begin{document}

\title[Strongly singular bilinear operators] {Strongly singular bilinear Calder\'on-Zygmund operators and a class of bilinear pseudodifferential operators}
\author{\'Arp\'ad B\'enyi, Lucas Chaffee \and Virginia Naibo}

\address{\'Arp\'ad B\'enyi, Department of Mathematics,
516 High St, Western Washington University, Bellingham, WA 98225,
USA.} \email{arpad.benyi@wwu.edu}

\address{Lucas Chaffee, Department of Mathematics,
516 High St, Western Washington University, Bellingham, WA 98225,
USA.} \email{lucas.chaffee@wwu.edu}

\address{Virginia Naibo, Department of Mathematics, Kansas State University.
138 Cardwell Hall, 1228 N. 17th Street, Manhattan, KS  66506, USA.}
\email{vnaibo@ksu.edu}

\thanks{The first author is partially supported by a grant from the Simons Foundation (No. 246024). The third author is partially supported by the NSF under grant DMS 1500381.}

\subjclass[2010]{Primary: 35S05, 47G30, 42B20.  Secondary: 42B15}


\date{\today}

\keywords{Stronly singular bilinear operators, Calder\'on--Zygmund operators, bilinear pseudodifferential operators}

\begin{abstract}
Motivated by the study of kernels of bilinear pseudodifferential operators with symbols in a  H\"ormander class of critical order, we investigate  boundedness properties of strongly singular Calder\'on--Zygmund operators in the bilinear setting.  For such operators, whose kernels satisfy integral-type conditions, we establish boundedness properties in the setting of Lebesgue spaces as well as endpoint  mappings involving the space of functions of bounded mean oscillations and the Hardy space. Assuming pointwise-type conditions on the kernels, we show that strongly  singular bilinear Calder\'on--Zygmund operators satisfy pointwise estimates in terms of maximal operators, which imply their boundedness in weighted Lebesgue spaces.
\end{abstract}

\maketitle

\section{Introduction}

Bilinear pseudodifferential operators with symbols in the bilinear H\"ormander classes $BS_{\rho, \delta}^m$  are a priori defined from $\mathcal S(\rn)\times\mathcal S(\rn)$ into $\mathcal S'(\rn)$ and have the form
\[T_\sigma (f,g)(x):= \int_{\rtn} \sigma(x, \xi, \eta) \fhat(\xi) \ghat(\eta) e^{ix\cdot(\xi+\eta)} \dxi \deta \quad \forall x \in \rn,\, f,g\in\sw;
\]
given $0\leq \delta\leq \rho\leq 1$ and $m \in \re,$ the corresponding symbol $\sigma$ belongs to the class $BS_{\rho, \delta}^m$ if $\sigma:\rn\times\rn\times\rn\to\com$ is an infinitely differentiable function such that for any given multi-indices $\alpha,\beta,\gamma\in \naz$ there exists $C_{\alpha,\beta,\gamma}>0$ satisfying
\begin{equation}\label{def:Bmrd}
|\partial_x^\alpha \partial_\xi^\beta \partial_\eta^\gamma \sigma(x, \xi, \eta)| \le C_{\alpha,\beta,\gamma} (1+|\xi|+|\eta|)^{m +\delta \abs{\alpha}-\rho(\abs{\beta+\gamma})} \quad \forall x, \xi, \eta\in \rn.
\end{equation}
Such operators have been  studied extensively and their boundedness properties have been proved in a variety of settings; see  the articles
B\'enyi--Bernicot--Maldonado--Naibo--Torres~\cite{MR3205530}, B\'enyi--Maldonado--Naibo--Torres~\cite{MR2660466}, B\'enyi--Torres~\cite{MR1986065, MR2046194}, Brummer--Naibo~\cite{MR3750234}, Herbert--Naibo~\cite{MR3211086, MR3627725},  Koezuka--Tomita~\cite{MR3750316}, Michalowski--Rule--Staubach \cite{MR3165300},  Miyachi--Tomita \cite{MR3179688, MiTo, MiTo2}, Naibo \cite{MR3393696,MR3411149}, Naibo--Thomson~\cite{NaTho}, Rodr\'{\i}guez-L\'opez--Staubach~\cite{MR3035059} and references therein.

 The work in this manuscript is motivated by bilinear pseudodifferential operators with symbols in the class $BS^{-n(1-\rho)}_{\rho,\delta}$ with $0\leq \delta\leq \rho<1.$  As shown  in \cite[Theorem 2.2]{MR3205530}, if $\sigma\in BS^{m}_{\rho,\delta}$ with $m<-n(1-\rho),$  then $T_\sigma$ is bounded from $L^\infty(\rn)\times L^\infty(\rn)$ into $L^\infty(\rn);$ moreover, it was proved in \cite[Theorem A.2]{MR3179688} that if $\sigma\in BS^{m}_{\rho,\delta}$ with $m>-n(1-\rho),$ then $T_\sigma$ may fail to be bounded from $L^\infty(\rn)\times L^\infty(\rn)$ into $BMO(\rn).$ In view of this, $-n(1-\rho)$ and $BS^{-n(1-\rho)}_{\rho,\delta}$ are referred to as {\it a critical order} and {\it a critical bilinear H\"ormander class}, respectively.
 If $\sigma$ is in the critical class $BS_{\rho, \delta}^{-n(1-\rho)},$  then $T_\sigma$ is  also bounded from $L^\infty(\rn)\times L^\infty(\rn) $ into $BMO(\rn)$; this was first proved for  $0\le \rho< 1/2$ and $\delta=0$ in \cite[Theorem 2.4]{MR3205530} and then extended to $0\le \delta\le \rho< 1/2$  in \cite[Theorem 1.1]{MR3411149}.   Finally, the results in the recent manuscript \cite{MiTo}  settled the boundedness from $L^\infty(\rn)\times L^\infty(\rn) $ into $BMO(\rn)$ for all operators with symbols in the critical classes of order  $-n(1-\rho)$ with $0<\rho<1.$

In many instances, it is convenient to consider the kernel representation of  bilinear pseudodifferential operators with symbols in $BS^m_{\rho,\delta}$, formally
$$T_\sigma (f, g)(x)=\int_{\rtn} K_\sigma (x, y, z) f(y)g(z)\,dydz,$$
since the estimates of the symbol $\sigma$ translate into quantitative estimates on the kernel $K_\sigma,$ see \cite[Theorem E]{MR3205530}. For example, if $\sigma\in BS^0_{1,\delta}$ with $0\le \delta<1$ or  $\sigma\in BS^{m}_{\rho,\delta}$ with $m<-2n(1-\rho),$  $0\leq \delta\le \rho\le1,$  $\delta<1$ and $\rho>0$, then  $T_\sigma$ is a bilinear Calder\'on--Zygmund operator; that is, $T_\sigma$ is bounded from  $L^2(\rn)\times L^2(\rn)$ into $L^1(\rn)$ and the kernel $K_\sigma$ is a locally integrable function away from the diagonal $\Delta=\{(x,x,x): x\in\rn\}$ that, for some $\varepsilon>0,$ satisfies estimates of the form
\begin{align*}
|K_\sigma (x, y, z) | &\lesssim \big(|x-y| + |x- z|\big)^{-2n},
\end{align*}
\begin{align*}
&|K_\sigma (x+h, y, z) -K_\sigma(x,y,z)| +|K_\sigma (x, y+h, z) -K_\sigma(x,y,z)|+\\
&\hspace{3cm}|K_\sigma (x, y, z+h) -K_\sigma(x,y,z)| \lesssim |h|^\varepsilon\big(|x-y| + |x-z| \big)^{-2n-\varepsilon},
\end{align*}
for $(x,y,z)\in \mathbb R^{3n}\setminus \Delta $ and $|h|\le\frac{1}{2} (|x-y|+|x-z|).$ In particular,  the  bilinear Calder\'on--Zygmund theory established in Coifman--Meyer~\cite{MR518170} and Grafakos--Torres~\cite{ MR1880324}  gives that $T_\sigma$ is bounded from $L^{p}(\rn)\times L^{q}(\rn)$ into $L^r(\rn)$ with $1< p,q<\infty $ and $1/r=1/p+1/q$ , and from $L^\infty(\rn)\times L^\infty(\rn)$ into $BMO(\rn).$  However, except for some ranges of indices as those mentioned above, the classes $BS^m_{\rho,\delta}$ do not necessarily give rise to bilinear Calder\'on--Zygmund operators and  boundedness properties for the corresponding operators have been studied in the references cited at the beginning of this section.

In this article, we show that bilinear pseudodifferential operators with symbols in the critical classes $BS_{\rho, \delta}^{-n(1-\rho)}$ posses kernels that lie barely outside the scope of the bilinear Calder\'on--Zygmund theory. More specifically, the kernels of such operators are shown to be {\it strongly  singular bilinear Calder\'on--Zygmund kernels}, and when $0<\rho<1/2,$ the operators are shown to be {\it strongly  singular bilinear Calder\'on--Zygmund operators}. For the latter operators, whose kernels satisfy integral-type conditions, we  establish boundedness properties in the setting of Lebesgue spaces as well as the endpoint  mappings from $L^\infty(\rn)\times L^\infty(\rn)$ into $BMO(\rn),$ from $\mathcal{H}^1(\rn)\times L^\infty(\rn)$ into $L^1(\rn)$ and  from $ L^\infty(\rn)\times \mathcal{H}^1(\rn)$ into $L^1(\rn),$ where $\mathcal{H}^1(\rn)$ denotes the Hardy space. Assuming pointwise conditions on the kernels, we also show that strongly  singular bilinear Calder\'on--Zygmund operators satisfy pointwise estimates in terms of the sharp maximal operator and the Hardy--Littlewood maximal operator, which imply their boundedness in weighted Lebesgue spaces.

\bigskip

The article is organized as follows. In Section~\ref{sec:ssczo}, we define strongly singular bilinear Calder\'on--Zygmund operators and state and prove their boundedness properties in unweighted Lebesgue spaces as well as endpoint mappings involving $BMO(\rn)$ and $\mathcal{H}^1(\rn).$ Pointwise estimates   in terms of maximal operators and weighted estimates  in Lebesgue spaces for such singular operators are presented in Section~\ref{sec:maxweight}. Section~\ref{sec:critical}  discusses the realization of  bilinear pseudodifferential operators with symbols in the critical classes $BS^{-n(1-\rho)}_{\rho,\delta}$ as strongly singular bilinear Calder\'on--Zygmund operators.

We end this section with definitions and notation used throughout the manuscript.

\subsection*{Definitions and notation.}
The Hardy--Littlewood maximal operator will be denoted by $\hl$, that is,
$$
\hl(f)(x)=\sup_{x\in Q}\dashint_Q |f(y)|\,dy\quad \forall x\in\rn, f\in L^1_{\text{loc}}(\rn),
$$
where the supremum is taken over all cubes $Q\subset \rn$ containing $x$ and $\dashint_Q |f(y)|\dy=\frac{1}{|Q|}\int_Q |f(y)|\dy.$

A weight on $\rn$ is a non-negative locally integrable function defined on $\rn.$ Given $1<p<\infty,$ the Muckenhoupt class  $A_p$ is defined as the family of weights $w$ on $\rn$ such that
\[
\sup_{Q}\left(\dashint_Qw(y)\dy\right)\left(\dashint_Q w(y)^{1-p'}\dy\right)^{p-1}<\infty,
\]
where the supremum is taken over all cubes $Q\subset \rn$ and $p'$ is the conjugate exponent of $p.$ We set $A_\infty:=\cup_{p>1} A_p.$

Given a weight on $\rn$ and $0<p\le \infty,$ the notation $L^p_w(\rn)$ means the Lebesgue space with respect to the measure $w(x)\dx;$ if $w\equiv 1$ we simply write $L^p(\rn).$ We recall that for $1<p<\infty,$ $w\in A_p$ if and only if $\hl$ is bounded on $L^p_w(\rn).$

The sharp maximal operator $\ms$ is defined by
$$
\ms (f)(x)=\sup_{x\in Q}\dashint_Q |f(y)-f_Q|\,dy \quad \forall x\in\rn, f\in L^1_{\text{loc}}(\rn),
$$
where $f_Q=\dashint_Q f(y)\,dy$ and  the supremum is taken over  cubes $Q\subset \rn$ containing $x.$

The space of functions of bounded mean oscillations, $BMO(\rn),$  consists of all measurable functions defined on $\rn$ (identified modulo constants) such that
$$
\norm{f}{BMO}:=\norm{\ms(f)}{L^\infty}<\infty.
$$
We recall that the dual of  the Hardy space $\mathcal{H}^1(\rn)$ is $BMO(\rn).$

The Schwartz class of smooth rapidly decreasing functions on $\rn$ will be denoted by $\sw$ and its dual, the class of tempered distributions, by $\swp.$ The notation $C_c^\infty(\rn)$ will mean the space of compactly supported infinitely differentiable functions defined on $\rn.$ The space of bounded measurable functions defined on $\rn$ that have compact support will be indicated by $L^\infty_c(\rn).$

The Fourier transform of a tempered distribution $f$ is denoted by $\fhat.$

The notation $A\lesssim B$ means $A\le c \,B,$ where $c$ is a constant that may depend on some of the parameters and weights used but not on the functions involved.

\section{Strongly singular bilinear Calder\'on--Zygmund operators}\label{sec:ssczo}

In this section we first present the definitions of strongly singular bilinear Calder\'on--Zygmund kernels and operators, which are inspired by the work in the linear setting of Fefferman~\cite{MR0257819}, Fefferman--Stein~\cite{MR0447953} and Alvarez--Millman~\cite{MR849442}. We  then state and prove boundedness properties of these operators in the context  of Lebesgue spaces,  $\mathcal{H}^1(\rn)$ and $BMO(\rn).$

Let $T:\mathcal S(\rn)\times \mathcal S(\rn)\to \mathcal S'(\rn)$ be a continuous bilinear operator and $\mathcal{K}$ be a complex-valued locally integrable function  defined on $\re^{3n}\setminus \Delta.$ We say that $T$ is associated to $\mathcal{K}$ if for any $f, g\in C^\infty_c(\rn),$
\begin{equation}\label{eq:Tpointwise}
T(f, g)(x)=\int_{\rtn} {\mathcal K}(x, y, z)f(y)g(z)\,dydz\quad  \forall x\not\in \text{supp}\,(f)\cap\text{supp}\, (g).
\end{equation}
 The formal transposes of the operator $T$ will be denoted by $T^{*1}$ and $T^{*2}$ and are defined by
\[
\langle T(f,g), h\rangle=\langle T^{*1}(h,g), f\rangle=\langle T^{*2}(f,h), g\rangle\quad \forall f,g,h\in\sw.
\]
It follows that the kernels $\mathcal{K}^{*j}$ of $T^{*j}$  for $j=1,2$ are  given by
\[
\mathcal{K}^{*1}(x,y,z)=\mathcal{K}(y,x,z)\quad \text{ and }  \quad \mathcal{K}^{*2}(x,y,z)=\mathcal{K}(z,y,x).
\]

\begin{definition}
\label{def:strongly}
Let $T:\mathcal S(\rn)\times \mathcal S(\rn)\to \mathcal S'(\rn)$ be a continuous bilinear operator associated to a  complex-valued locally integrable function  $\mathcal{K}$ defined on $\re^{3n}\setminus \Delta.$ We say that $T$ is a \emph{strongly singular bilinear Calder\'on--Zygmund operator} if the following conditions hold:
\begin{enumerate}[(C1)]
\item $T$  can be extended to a bounded operator from $L^2(\rn)\times L^2(\rn)$ into $L^1(\rn);$
\item\label{item:kernelcond} there exists $0<\expo<1$ such that
\begin{equation}\label{eq:kernelcondx}
\sup_{x,x'\in\rn}\int_{\abs{x-x'}^\expo\lesssim \abs{x-y}+\abs{x-z}}\abs{\mathcal{K}(x,y,z)-\mathcal{K}(x',y,z)}\,dydz<\infty,
\end{equation}
\begin{equation}\label{eq:kernelcondy}
\sup_{y,y'\in\rn}\int_{\abs{y-y'}^\expo\lesssim \abs{y-x}+\abs{y-z}}\abs{\mathcal{K}(x,y,z)-\mathcal{K}(x,y',z)}\,dxdz<\infty,
\end{equation}
\begin{equation}\label{eq:kernelcondz}
\sup_{z,z'\in\rn}\int_{\abs{z-z'}^\expo\lesssim \abs{z-x}+\abs{z-y}}\abs{\mathcal{K}(x,y,z)-\mathcal{K}(x,y,z')}\,dxdy<\infty;
\end{equation}
\item $T,$ $T^{*1}$ and $T^{*2}$ can be extended to bounded operators from $L^{2}(\rn)\times L^2(\rn)$ into $L^{{1}/{\expo}}(\rn)$ with $\expo$ as given above.
\end{enumerate}
\end{definition}

A straightforward example of a kernel that satisfies \eqref{eq:kernelcondx} is the following: Let $\mathcal{K}$ be a locally integrable function defined on $\re^{3n}\setminus \Delta$ such that there   exist $0<s\le 1$ and $0 < \expo<1$ for which
\begin{equation}\label{eq:kernelLip}
\abs{\mathcal{K}(x,y,z)-\mathcal{K}(x',y,z)}\lesssim \frac{\abs{x-x'}^s}{(\abs{x-y}+\abs{x-z})^{2n+\frac{s}{\expo}}}\quad \text{if }  \abs{x-x'}^\expo\lesssim \abs{x-y}+\abs{x-z}.
\end{equation}
Similar statements follow in relation to   \eqref{eq:kernelcondy} and  \eqref{eq:kernelcondz}.

\begin{remark}\label{re:Tpointwise} We note that if $T:\sw\times\sw\to\swp$ is a bilinear continuous operator
 associated to a kernel $\mathcal{K}$ that can be extended to a bounded operator from $L^2(\rn)\times L^2(\rn)$ into $L^r(\rn)$ for some $1\le r<\infty$ then \eqref{eq:Tpointwise} holds for $f,g\in L^\infty_c(\rn)$ and for almost every $x\notin \text{supp}\,(f)\cap\text{supp}\, (g).$  This follows from a limiting argument and will be implicitly used throughout the proofs.
\end{remark}

We next state and prove the main results in this section.

\begin{theorem}\label{thm:boundgral} 
Let $T$ be a strongly singular bilinear Calder\'on--Zygmund operator. Then $T$ can be extended to a bounded operator from $L^\infty(\rn)\times L^\infty(\rn)$ into $BMO(\rn),$ from $\mathcal{H}^1(\rn)\times L^\infty(\rn)$ into $L^1(\rn),$  from $ L^\infty(\rn)\times \mathcal{H}^1(\rn)$ into $L^1(\rn)$ and from $L^p(\rn)\times L^q(\rn)$ into $L^r(\rn)$ for all $p,q,r$ satisfying $1\le p,q\le \infty,$ $1\le r<\infty,$ ${1}/{r}={1}/{p}+{1}/{q},$ $({1}/{p},{1}/{q},{1}/{r})\neq (1,0,1)$ and $({1}/{p},{1}/{q},{1}/{r})\neq (0,1,1).$
\end{theorem}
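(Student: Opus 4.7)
The plan is to follow the classical endpoint-first template: establish the $L^\infty\times L^\infty\to BMO$ bound directly from the hypotheses, then transfer to the two $\mathcal{H}^1$ endpoints by $\mathcal{H}^1$--$BMO$ duality applied to the transposes $T^{*1}$ and $T^{*2}$, and finally fill in the Lebesgue range by bilinear interpolation among the four endpoints.

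For the $BMO$ endpoint, fix $f,g\in L^\infty(\rn)$ and a cube $Q$ of side $\ell$ centered at $\qc$, and estimate $\dashint_Q|T(f,g)-c_Q|\,dx$ with
\[
c_Q := T(f_1,g_2)(\qc)+T(f_2,g_1)(\qc)+T(f_2,g_2)(\qc),
\]
where $f_1=f\chi_{\qtil}$, $f_2=f-f_1$ (and similarly for $g$), and $\qtil$ is a concentric dilate of $Q$ whose side length is taken $\sim \ell^\expo$ when $\ell\le 1$ and $\sim \ell$ when $\ell>1$, with a sufficiently large fixed constant so that the support conditions $y\notin\qtil$ or $z\notin\qtil$ combined with $x\in Q$ force $|x-\qc|^\expo\lesssim |x-y|+|x-z|$. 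The three off-diagonal terms are then controlled by writing $T(f_i,g_j)(x)-T(f_i,g_j)(\qc)$ as an integral of $[\mathcal{K}(x,y,z)-\mathcal{K}(\qc,y,z)]$ against $f_ig_j$ and invoking \eqref{eq:kernelcondx} after Fubini. The on-diagonal term $T(f_1,g_1)$ is handled by H\"older's inequality with exponents $(1/(1-\expo),1/\expo)$ together with the $L^2\times L^2\to L^{1/\expo}$ bound in (C3) in the small-cube regime (which works precisely because the exponent $\expo$ is shared by (C2) and (C3), making $|Q|^{-\expo}|\qtil|\sim 1$) and by the $L^2\times L^2\to L^1$ bound (C1) in the large-cube regime (where $|Q|^{-1}|\qtil|\sim 1$).

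For the Hardy-space endpoints, conditions \eqref{eq:kernelcondx}--\eqref{eq:kernelcondz} and the part of (C3) concerning the transposes are symmetric under the involutions $(x,y,z)\mapsto(y,x,z)$ and $(x,y,z)\mapsto(z,y,x)$ defining $\mathcal{K}^{*1}$ and $\mathcal{K}^{*2}$, so the $BMO$ argument of the previous step applies verbatim to $T^{*1}$ and $T^{*2}$. (The large-cube case of that argument also needs $T^{*j}:L^2\times L^2\to L^1$, which is not literally part of the hypotheses and has to be extracted from (C1) and (C2) by a standard bilinear Calder\'on--Zygmund duality, using the fact that $\mathcal{K}$ behaves as a classical CZ kernel outside the $\expo$-fattened diagonal.) Once $T^{*1},T^{*2}: L^\infty\times L^\infty\to BMO$ is in hand, the $\mathcal{H}^1$--$BMO$ duality gives
\[
\|T(f,g)\|_{L^1}=\sup_{\|h\|_\infty\le 1}\abs{\langle T^{*1}(h,g), f\rangle}\lesssim \|g\|_\infty\|f\|_{\mathcal{H}^1},
\]
and its symmetric counterpart yields the other endpoint. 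The remaining Lebesgue bounds, for all admissible $(1/p,1/q,1/r)$ outside the two excluded corners, follow by bilinear complex interpolation among (C1), the $BMO$ endpoint, and the two $\mathcal{H}^1$ endpoints.

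The main obstacle is the first step: the two-scale dilate $\qtil$ has to serve two seemingly competing purposes at once. It must place the off-diagonal pieces inside the kernel-smoothness region $|x-\qc|^\expo\lesssim |x-y|+|x-z|$ where \eqref{eq:kernelcondx} is valid, and it must keep $|Q|^{-\expo}|\qtil|$ bounded so that the (C3) estimate on $T(f_1,g_1)$ is admissible. These two demands are compatible precisely because the exponent $\expo$ is shared between (C2) and (C3); carrying out the bookkeeping through all four bilinear pieces --- in particular checking that the integral form of the kernel smoothness survives the Fubini interchange and the support constraints imposed by the dilation on both the small- and large-cube sides --- is the technical heart of the argument. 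A secondary technical point is establishing $L^2\times L^2\to L^1$ for $T^{*1}$ and $T^{*2}$, needed in the transposed BMO argument.
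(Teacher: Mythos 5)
Your overall architecture --- the four-piece decomposition with the two-scale dilate $\qtil$, H\"older plus the $L^{1/\expo}$ bound for small cubes, the $L^1$ bound for large cubes, the integral kernel condition \eqref{eq:kernelcondx} for the off-diagonal pieces, then $\mathcal H^1$--$BMO$ duality and bilinear interpolation --- is exactly the paper's. The one genuine gap is the step you flag parenthetically: the large-cube diagonal estimate for $T^{*1}$ and $T^{*2}$. You propose to extract $T^{*j}\colon L^2\times L^2\to L^1$ from (C1) and (C2) ``by a standard bilinear Calder\'on--Zygmund duality, using the fact that $\mathcal K$ behaves as a classical CZ kernel outside the $\expo$-fattened diagonal.'' This does not work. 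Definition~\ref{def:strongly} imposes no size condition on $\mathcal K$ at all, and \eqref{eq:kernelcondx}--\eqref{eq:kernelcondz} are integral conditions over the region $\abs{x-x'}^\expo\lesssim\abs{x-y}+\abs{x-z}$, which for $\abs{x-x'}\le 1$ is \emph{smaller} than the classical region $\abs{x-x'}\lesssim\abs{x-y}+\abs{x-z}$; so (C2) is strictly weaker than the bilinear H\"ormander condition, and the classical machinery (weak-type endpoint, interpolation, duality) needed to transfer $L^2\times L^2\to L^1$ boundedness to the transposes is unavailable. The whole point of ``strongly singular'' is that these kernels need not be Calder\'on--Zygmund kernels.

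The repair is simple and is what the paper does: from (C1) alone, duality gives $T^{*1}\colon L^\infty\times L^2\to L^2$ and $T^{*2}\colon L^2\times L^\infty\to L^2$, since $\abs{\langle T^{*1}(h,g),f\rangle}=\abs{\langle T(f,g),h\rangle}\le\norm{T(f,g)}{L^1}\norm{h}{L^\infty}\lesssim \norm{f}{L^2}\norm{g}{L^2}\norm{h}{L^\infty}$. In the large-cube regime this mixed bound serves just as well as $L^2\times L^2\to L^1$ for the diagonal piece: $\dashint_Q\abs{T^{*1}(h\chi_\qtil,g\chi_\qtil)}\le \abs{Q}^{-1/2}\norm{T^{*1}(h\chi_\qtil,g\chi_\qtil)}{L^2}\lesssim \abs{Q}^{-1/2}\abs{\qtil}^{1/2}\norm{h}{L^\infty}\norm{g}{L^\infty}\lesssim \norm{h}{L^\infty}\norm{g}{L^\infty}$, since $\abs{\qtil}\sim\abs{Q}$ when $d>1$. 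This is precisely why Theorem~\ref{thm:boundBMO} is stated with the alternative hypotheses ``(or from $L^2(\rn)\times L^\infty(\rn)$ into $L^2(\rn)$ or from $L^\infty(\rn)\times L^2(\rn)$ into $L^2(\rn)$).'' With that substitution your argument goes through and coincides with the paper's proof.
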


The proof of Theorem~\ref{thm:boundgral}, detailed at the end of this section, will be a consequence of the following theorem, duality and bilinear interpolation.

\begin{theorem}\label{thm:boundBMO}
Let $T:\sw\times\sw\to\swp$ be a bilinear continuous operator associated to a  complex-valued locally integrable function  $\mathcal{K}$ defined on $\re^{3n}\setminus \Delta$ that verifies condition \eqref{eq:kernelcondx} for some $0<\expo<1$. Assume also that $T$ can be extended to a bounded operator from
 $L^2(\rn)\times L^2(\rn)$ into $L^1(\rn)$ (or from $L^2(\rn)\times L^\infty(\rn)$ into $L^2(\rn)$ or from $L^\infty(\rn)\times L^2(\rn)$ into $L^2(\rn)$) and from $L^2(\rn)\times L^2(\rn)$ into $L^{{1}/{\expo}}(\rn).$
 Then $T$ can be extended to a bounded operator from $L^\infty(\rn)\times L^\infty(\rn)$ into $BMO(\rn).$
\end{theorem}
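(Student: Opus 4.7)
The plan is to verify the standard $BMO$ criterion by exhibiting, for every cube $Q\subset\rn$, a constant $c_Q$ with
\[
\dashint_Q |T(f,g)(x)-c_Q|\,dx\lesssim \|f\|_{L^\infty}\|g\|_{L^\infty}
\]
uniformly in $Q$, $f$ and $g$. The strategy is the scale-sensitive Calder\'on--Zygmund splitting used for strongly singular linear operators by Fefferman~\cite{MR0257819} and Alvarez--Millman~\cite{MR849442}, adapted to the bilinear setting. Given a cube $Q$ centered at $\qc$ with side $\ell=\ell(Q)$, I would introduce the concentric comparison cube $\qtil$ with side $C\max(\ell,\ell^\expo)$ for a large absolute constant $C$, decompose $f=f_1+f_2$, $g=g_1+g_2$ with $f_1=f\chi_{\qtil}$, $g_1=g\chi_{\qtil}$, and use the resulting four-term expansion
\[
T(f,g)=T(f_1,g_1)+T(f_1,g_2)+T(f_2,g_1)+T(f_2,g_2).
\]
I would then \emph{define} $c_Q$ so that $T(f,g)(x)-c_Q$ equals $T(f_1,g_1)(x)$ plus the three differences $T(f_i,g_j)(x)-T(f_i,g_j)(\qc)$ with $(i,j)\ne(1,1)$; absolute convergence of the individual integrals at $\qc$ is not needed, since the differences are manifestly integrable by \eqref{eq:kernelcondx}.

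The local piece $T(f_1,g_1)$ is controlled by the $L^2$-based boundedness. In the small-scale regime $\ell\le 1$, the $L^2\times L^2\to L^{1/\expo}$ hypothesis combined with H\"older's inequality yields
\[
\dashint_Q |T(f_1,g_1)|\,dx \lesssim |Q|^{-\expo}\,\|T(f_1,g_1)\|_{L^{1/\expo}}\lesssim \frac{|\qtil|}{|Q|^\expo}\|f\|_\infty\|g\|_\infty\simeq \|f\|_\infty\|g\|_\infty,
\]
where the last relation uses $|\qtil|\simeq |Q|^\expo$ in this regime. When $\ell>1$ the same outcome follows from the $L^2\times L^2\to L^1$ bound and $|\qtil|\simeq |Q|$. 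Under the alternative hypothesis ($L^2\times L^\infty\to L^2$ or $L^\infty\times L^2\to L^2$), a single-variable splitting together with the $L^\infty$ bound on the unsplit factor achieves the same estimate.

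For each non-local difference the key geometric fact is that on the support of $f_ig_j$ at least one of $y,z$ lies in $\qtil^c$, so $|x-y|+|x-z|\gtrsim \ell(\qtil)\gtrsim \ell^\expo\gtrsim |x-\qc|^\expo$ for every $x\in Q$; the integration thus occurs entirely in the region covered by \eqref{eq:kernelcondx}. Remark~\ref{re:Tpointwise} justifies the representation
\[
T(f_i,g_j)(x)-T(f_i,g_j)(\qc)=\int [\mathcal K(x,y,z)-\mathcal K(\qc,y,z)]f_i(y)g_j(z)\,dy\,dz,
\]
and pulling $\|f\|_\infty\|g\|_\infty$ outside the integral reduces the matter to the finite supremum provided by \eqref{eq:kernelcondx}, giving $|T(f_i,g_j)(x)-T(f_i,g_j)(\qc)|\lesssim \|f\|_\infty\|g\|_\infty$ uniformly in $x\in Q$. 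Averaging in $x$ and combining with the local estimate closes the argument.

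The chief obstacle is the geometric book-keeping that forces $\qtil$ to simultaneously satisfy $|\qtil|\simeq |Q|^\expo$ (needed for the local estimate when $\ell\le 1$) and $\ell(\qtil)\gtrsim \ell^\expo$ (needed to access \eqref{eq:kernelcondx}), and to interface cleanly with the regime $\ell>1$. A secondary technicality is that the standing hypotheses only control $T$ on $L^2\times L^2$, so the cube-wise estimate should first be established for $f,g\in L^\infty_c(\rn)$ via Remark~\ref{re:Tpointwise}, and then extended to arbitrary bounded functions by a standard truncation and limit argument that exploits the uniformity of the bound.
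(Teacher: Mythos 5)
Your proposal is correct and follows essentially the same route as the paper: the same scale-dependent comparison cube $\qtil$ (side $\sim d^\expo$ for $d\le 1$, $\sim d$ for $d>1$), the same four-term splitting with $c_Q$ built from the kernel evaluated at $\qc$, the local term handled by H\"older together with the $L^2\times L^2\to L^{1/\expo}$ (resp.\ $L^2\times L^2\to L^1$) bound, and the non-local differences absorbed by \eqref{eq:kernelcondx}. The extension from $L^\infty_c(\rn)\times L^\infty_c(\rn)$ to $L^\infty(\rn)\times L^\infty(\rn)$ that you defer to a "standard truncation and limit argument" is exactly what the paper carries out in its Appendix.
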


\begin{proof} Note that $T$ is well-defined on $L^\infty_c(\rn)\times L^\infty_c(\rn)$ since $T$ is defined on $L^2(\rn)\times L^2(\rn)$ and recall that, as pointed out in Remark~\ref{re:Tpointwise}, \eqref{eq:Tpointwise} holds for $f,g\in L^\infty_c(\rn)$ and for almost every $x\notin \text{supp}\,(f)\cap\text{supp}\, (g).$ 
We will prove  that for all cubes $Q\subset \rn$ there is $C_Q\in\com$ such that
\begin{equation*}
\dashint_Q\abs{T(f,g)(x)-C_Q}\,dx\lesssim \norm{f}{L^\infty}\norm{g}{L^\infty} \quad \forall f,g\in L^\infty_c(\rn),
\end{equation*}
where the implicit constant is independent of $Q.$
 This gives that $T$ is bounded  from $L^\infty_c(\rn)\times L^\infty_c(\rn)$ into $BMO(\rn).$ We refer the reader to Appendix~\ref{app} regarding how   $T$ can be extended to a bounded operator from $L^\infty(\rn)\times L^\infty(\rn)$ into $BMO(\rn).$

Fix  a  cube $Q$   contained in $\rn$ of side length $d>0$ and center $\qc.$ Define $\qtil$ as the cube with center $\qc$ and side length $2d^{\expo}$ if $d\le 1$ or side length $2d$ if $d>1;$ note that  $Q\subset \qtil.$ For $f, g\in L^{\infty}_c(\rn)$, we write
\[
T(f,g)=T(f\chi_{\qtil},g\chi_\qtil)+T(f\chi_\qtil,g\chi_{\qtil^c})+T(f\chi_{\qtil^c},g\chi_\qtil)+T(f\chi_{\qtil^c},g\chi_{\qtil^c}).
\]

We first estimate the term $T(f\chi_\qtil,g\chi_\qtil).$ If $d\leq 1$ apply H\"older's inequality, the boundedness of $T$ from $L^2(\rn)\times L^2(\rn)$ into $L^{{1}/{\expo}}(\rn)$ and the fact that $|\qtil|\sim\abs{Q}^\expo$ to get that
\begin{align*}
\dashint_Q\abs{T(f\chi_\qtil,g\chi_\qtil)(x)}\,dx&\le \left(\dashint_Q\abs{T(f\chi_\qtil,g\chi_\qtil)(x)}^{\frac{1}{\expo}}\,dx\right)^{\expo}\\
&\lesssim  \left(\dashint_\qtil\abs{f(y)}^2\,dy\right)^{\frac{1}{2}} \left(\dashint_\qtil\abs{g(z)}^2\,dz\right)^{\frac{1}{2}}\\
&\lesssim  \norm{f}{L^\infty}\norm{g}{L^\infty}.
\end{align*}
When $d>1$ we proceed similarly using the boundedness of $T$ from $L^2(\rn)\times L^2(\rn)$ into $L^1(\rn)$ (or from $L^2(\rn)\times L^\infty(\rn)$ into $L^2(\rn)$ or from $L^\infty(\rn)\times L^2(\rn)$ into $L^2(\rn)$)  and that $|\qtil|\sim \abs{Q}$, which implies
\begin{align*}
\dashint_Q\abs{T(f\chi_\qtil,g\chi_\qtil)(x)}\,dx
&\lesssim \norm{f}{L^\infty}\norm{g}{L^\infty}.
\end{align*}

We will next prove that
\begin{align}
\dashint_Q\abs{T(f\chi_\qtil,g\chi_{\qtil^c})(x)-C_{Q,1}}\,dx
&\lesssim  \norm{f}{L^\infty}\norm{g}{L^\infty},\label{eq:est1}\\
\dashint_Q\abs{T(f\chi_{\qtil^c},g\chi_\qtil)(x)-C_{Q,2}}\,dx
&\lesssim  \norm{f}{L^\infty}\norm{g}{L^\infty},\label{eq:est2}\\
\dashint_Q\abs{T(f\chi_{\qtil^c},g\chi_{\qtil^c})(x)-C_{Q,3}}\,dx
&\lesssim  \norm{f}{L^\infty}\norm{g}{L^\infty},\label{eq:est3}
\end{align}
with the implicit constants independent of $f,g$ and $Q,$ and where
\begin{align*}
C_{Q,1}&:=\int_{\rtn} {\mathcal{K}}(\qc,y,z) f(y)\chi_\qtil(y) g(z)\chi_{\qtil^c}(z)\,dydz,\\
C_{Q,2}&:=\int_{\rtn} {\mathcal{K}}(\qc,y,z) f(y)\chi_{\qtil^c}(y) g(z)\chi_\qtil(z)\,dydz,\\
C_{Q,3}&:=\int_{\rtn} {\mathcal{K}}(\qc,y,z) f(y)\chi_{\qtil^c}(y) g(z)\chi_{\qtil^c}(z)\,dydz.
\end{align*}
We will show \eqref{eq:est1}, with the estimates \eqref{eq:est2} and \eqref{eq:est3} following in the same way.
Note that if $x\in Q,$ $y\in \qtil$ and $z\in \qtil^c$ then
\begin{align*}
|x-\qc|^\expo&\lesssim d^\expo \lesssim  |\qc-z|\lesssim |\qc-z|+|\qc-y|\quad \text{if } d\le1,\\
|x-\qc|^\expo&\lesssim d^\expo\le d\lesssim |\qc-z|\lesssim |\qc-z|+|\qc-y|\quad \text{if } d>1.
\end{align*}
Therefore, for $x\in Q$ and using \eqref{eq:kernelcondx}, we have
\begin{align*}
|T(f\chi_\qtil, g\chi_{\qtil^c})&(x)-C_{Q,1}|\\
&\lesssim \int_{y\in \qtil,z\in \qtil^c}\abs{\mathcal{K}(x,y,z)-\mathcal{K}(\qc,y,z)}\abs{f(y)}\abs{g(z)}\,dydz\\
&\lesssim  \int_{|x-\qc|^\expo\lesssim |\qc-z|+|\qc-y|}\abs{\mathcal{K}(x,y,z)-\mathcal{K}(\qc,y,z)}\abs{f(y)}\abs{g(z)}\,dydz\\
&\lesssim \norm{f}{L^\infty}\norm{g}{L^\infty}.
\end{align*}
Averaging in $x$ over $Q$ we obtain \eqref{eq:est1}. 

\end{proof}

We end this section with the proof of Theorem~\ref{thm:boundgral}.

\begin{proof}[Proof of Theorem~\ref{thm:boundgral}]
Since $T$ can be extended to a  bounded operator from  $L^2(\rn)\times L^2(\rn)$ into $L^1(\rn),$ a duality argument implies that $T^{*1}$ and $T^{*2}$ can be extended to bounded operators from $L^\infty(\rn)\times L^2(\rn)$ into $L^2(\rn)$ and from  $L^2(\rn)\times L^\infty(\rn)$ into $L^2(\rn),$ respectively.
Theorem~\ref{thm:boundBMO} then gives that  $T,$ $T^{*1}$ and $T^{*2}$ can be extended to  bounded operators from $L^\infty(\rn)\times L^\infty(\rn)$ into $BMO(\rn).$  As a consequence, by duality, $T$ can be extended to a bounded operator from $\mathcal{H}^1(\rn)\times L^\infty(\rn)$ into $L^1(\rn)$ and from
$ L^\infty(\rn)\times \mathcal{H}^1(\rn)$ into $L^1(\rn).$ Bilinear complex interpolation implies that $T$ is bounded from $L^p(\rn)\times L^q(\rn)$ into $L^r(\rn)$ for all $p,q,r$ such that $({1}/{p},{1}/{q},{1}/{r})$ is in the convex hull of the points $(0,0,0),$ $(1,0,1)$ and $(0,1,1).$ That is, $1\le p,q\le \infty,$ $1\le r<\infty,$ ${1}/{r}={1}/{p}+{1}/{q},$ $({1}/{p},{1}/{q},{1}/{r})\neq (1,0,1)$ and $({1}/{p},{1}/{q},{1}/{r})\neq (0,1,1).$
\end{proof}

\section{Pointwise inequalities and weighted estimates for strongly singular bilinear Calder\'on--Zygmund operators}\label{sec:maxweight}

In this section we  show that Theorem~\ref{thm:boundBMO} can be improved if the stronger condition \eqref{eq:kernelLip} is assumed, by proving a pointwise inequality in terms of the sharp maximal operator and the Hardy--Littlewood maximal operator. In particular, such result will imply weighted estimates for strongly singular bilinear Calder\'on--Zygmund operators that satisfy  \eqref{eq:kernelLip}.

\begin{theorem}\label{thm:improvement}
Let $T:\sw\times\sw\to\swp$ be a bilinear continuous operator associated to a  complex-valued locally integrable function  $\mathcal{K}$ defined on $\re^{3n}\setminus \Delta$ that verifies condition \eqref{eq:kernelLip} for some $0<\expo<1$ and $0<s\le 1.$ Assume also that $T$ can be extended to a bounded operator from
 $L^2(\rn)\times L^2(\rn)$ into $L^1(\rn)$ and from $L^2(\rn)\times L^2(\rn)$ into $L^{{1}/{\expo}}(\rn).$ Then $T$ satisfies
\begin{equation}\label{eq:maxineq}
\ms(T(f,g))(x)\lesssim \hld(f)(x)\hld(g)(x) \quad \forall x\in\rn,\, f,g\in L^\infty_c(\rn),
\end{equation}
where $\hld(h)=\left(\mathcal{M}(\abs{h}^2)\right)^{{1}/{2}}.$
\end{theorem}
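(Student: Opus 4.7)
The plan is to mimic the cube-by-cube argument used in the proof of Theorem~\ref{thm:boundBMO}, but replace every appearance of $\norm{f}{L^\infty}\norm{g}{L^\infty}$ by $\hld(f)(x)\hld(g)(x)$. Concretely, for a cube $Q\ni x$ of side $d$ and center $\qc$, I would set $\qtil$ exactly as in the proof of Theorem~\ref{thm:boundBMO} (side $2d^{\expo}$ if $d\le 1$, side $2d$ otherwise), pick the constant
$$
c_Q:=C_{Q,1}+C_{Q,2}+C_{Q,3}=\int_{\rtn}\mathcal{K}(\qc,y,z)\bigl(1-\chi_{\qtil}(y)\chi_{\qtil}(z)\bigr)f(y)g(z)\dy\dz,
$$
and aim to show $\dashint_Q|T(f,g)(y)-c_Q|\dy\lesssim \hld(f)(x)\hld(g)(x)$, with an implicit constant independent of $Q$ and $x$; taking the supremum over $Q$ then yields \eqref{eq:maxineq}.

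The local piece $T(f\chi_\qtil,g\chi_\qtil)$ is handled as before: for $d\le 1$ I apply H\"older's inequality (raising the average to the $1/\expo$ power) and the $L^2\times L^2\to L^{1/\expo}$ bound, then rewrite $\norm{f\chi_\qtil}{L^2}=|\qtil|^{1/2}\bigl(\dashint_\qtil|f|^2\bigr)^{1/2}\le |\qtil|^{1/2}\hld(f)(x)$ (and similarly for $g$); the volume factors cancel because $|\qtil|\sim |Q|^{\expo}$. For $d>1$ I use the $L^2\times L^2\to L^1$ bound and the fact that $|\qtil|\sim|Q|$.

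For each of the three mixed pieces I estimate pointwise in $x\in Q$:
$$
|T(f\chi_A,g\chi_B)(x)-C_{Q,j}|\lesssim \int_{\rtn}|\mathcal{K}(x,y,z)-\mathcal{K}(\qc,y,z)|\,|f(y)\chi_A(y)|\,|g(z)\chi_B(z)|\dy\dz,
$$
where at least one of $A,B$ is $\qtil^c$. Exactly as in the BMO proof, the geometry of $\qtil$ gives $|x-\qc|^{\expo}\lesssim |\qc-y|+|\qc-z|$ on the support of the integrand, so \eqref{eq:kernelLip} applies and the numerator gains an $|x-\qc|^{s}\lesssim d^{s}$ factor while the denominator is $(|x-y|+|x-z|)^{2n+s/\expo}\sim(|\qc-y|+|\qc-z|)^{2n+s/\expo}$. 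I would then decompose $\qtil^c$ into dyadic annuli $\{2^k\ell\le |\qc-\cdot|<2^{k+1}\ell\}$ (with $\ell=d^{\expo}$ or $\ell=d$), bound each local $L^1$-average of $f$ and $g$ by Cauchy--Schwarz against $\hld(f)(x)$ and $\hld(g)(x)$, and sum the geometric series; the exponent $s/\expo>0$ guarantees convergence and the precise power matches $d^{-s}$, killing the $d^{s}$ prefactor in both regimes $d\le 1$ and $d>1$.

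The main technical point is arranging the bookkeeping so that the kernel smoothness exponent $s/\expo$ produces decay that survives Cauchy--Schwarz: one loses a full volume factor per maximal function (rather than an $L^\infty$ norm), so the dyadic sum now reads $\sum_k 2^{-k(n+s/\expo)}$ instead of $\sum_k 2^{-ks/\expo}$, and only the sharpened exponent $2n+s/\expo$ in \eqref{eq:kernelLip} (versus the weaker $2n$-type decay that would follow from \eqref{eq:kernelcondx} alone) makes the sum converge and gives an estimate independent of $d$. Everything else is bookkeeping: once the three mixed pieces are controlled by $\hld(f)(x)\hld(g)(x)$ and the local piece is controlled by the same quantity via H\"older, averaging over $x\in Q$ and taking the supremum over $Q$ containing $x$ yields \eqref{eq:maxineq}.
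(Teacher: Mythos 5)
Your proposal is correct and follows essentially the same route as the paper's proof: the same choice of $\qtil$ and of the constant $C_{Q,1}+C_{Q,2}+C_{Q,3}$, the same treatment of the local piece via the $L^2\times L^2\to L^{1/\expo}$ (resp.\ $L^{1}$) bounds, and the same dyadic-annulus decomposition of the mixed pieces with Cauchy--Schwarz against $\hld(f)(x)\hld(g)(x)$, where the sharpened exponent $2n+s/\expo$ in \eqref{eq:kernelLip} yields the convergent sum $\sum_k 2^{-kt(n+s/\expo)}$ and a prefactor $d^{s(1-t/\expo)}\le 1$ in both regimes. No gaps.
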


\begin{corollary}\label{coro:weights} If $T:\sw\times\sw\to\swp$ satisfies the hypothesis of Theorem~\ref{thm:improvement}, the following statements hold true:
\begin{enumerate}[(a)]
\item \label{item:a} $T$ can be extended to a  bounded operator from $L^\infty(\rn)\times L^\infty(\rn)$ into $BMO(\rn).$
\item \label{item:b}Let $2<p,q<\infty$ and $r$ be such that $1/r=1/p+1/q;$ consider $v\in A_{p/2},$ $w\in A_{q/2}$ and define $u:=v^{r/p}w^{r/q}.$ Then $T$ can be extended to a bounded operator from $L^{p}_v(\rn)\times L^q_w(\rn)$ into $L^r_u(\rn).$
\end{enumerate}
\end{corollary}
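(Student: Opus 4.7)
\emph{Proof proposal.} The plan is to deduce both parts from the pointwise sharp-maximal estimate $\ms(T(f,g))(x)\lesssim \hld(f)(x)\hld(g)(x)$ of Theorem~\ref{thm:improvement}, combined with standard tools from weighted harmonic analysis.

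For part (\ref{item:a}), I take $f,g \in L^\infty_c(\rn)$ and observe that $\hld(f)\le \norm{f}{L^\infty}$ and $\hld(g)\le \norm{g}{L^\infty}$ pointwise. Theorem~\ref{thm:improvement} then gives
\[
\norm{T(f,g)}{BMO}=\norm{\ms(T(f,g))}{L^\infty}\lesssim \norm{f}{L^\infty}\norm{g}{L^\infty},
\]
and the extension from $L^\infty_c\times L^\infty_c$ to $L^\infty\times L^\infty$ is carried out exactly as sketched in the appendix referenced at the end of the proof of Theorem~\ref{thm:boundBMO}.

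For part (\ref{item:b}), the main engine is the Fefferman--Stein weighted inequality $\norm{h}{L^r_u}\lesssim \norm{\ms(h)}{L^r_u}$, valid for any $u\in A_\infty$ once the left-hand side is known to be a priori finite. The hypotheses $v\in A_{p/2}$ and $w\in A_{q/2}$ place both weights in $A_\infty$, and the identity $r/p+r/q=1$ (which follows from $1/r=1/p+1/q$) means $u=v^{r/p}w^{r/q}$ is a log-convex combination of $A_\infty$ weights, hence itself in $A_\infty$ by a classical property of Muckenhoupt weights. Combining Fefferman--Stein, Theorem~\ref{thm:improvement}, and H\"older's inequality (splitting $u=v^{r/p}w^{r/q}$ and using that $p/r$ and $q/r$ are conjugate exponents) yields
\[
\norm{T(f,g)}{L^r_u}\lesssim \norm{\ms(T(f,g))}{L^r_u}\lesssim \norm{\hld(f)\hld(g)}{L^r_u}\le\norm{\hld(f)}{L^p_v}\norm{\hld(g)}{L^q_w}.
\]
Finally, since $\hld(h)=\hl(\abs{h}^2)^{1/2}$ and $p/2,q/2>1$, the Muckenhoupt characterization of the boundedness of $\hl$ applied with $v\in A_{p/2}$ and $w\in A_{q/2}$ gives $\norm{\hld(f)}{L^p_v}\lesssim \norm{f}{L^p_v}$ and $\norm{\hld(g)}{L^q_w}\lesssim \norm{g}{L^q_w}$, completing the chain of estimates.

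The main obstacle I anticipate is the a priori finiteness needed to legitimately invoke Fefferman--Stein, since the whole argument would be circular without it. My plan is to first establish the estimate for $f,g\in L^\infty_c(\rn)$, for which Theorem~\ref{thm:boundgral} already guarantees $T(f,g)\in L^s(\rn)$ for appropriate $s$, then to truncate $T(f,g)$ by $\chi_{B_N}$ (which does lie in $L^r_u$ because $u$ is locally integrable), apply Fefferman--Stein to the truncated function, and pass to the limit as $N\to\infty$ by monotone convergence. The general bound on $L^p_v\times L^q_w$ then follows from the $L^\infty_c$ estimate by a standard density argument.
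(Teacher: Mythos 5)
Your proposal is correct and follows essentially the same route as the paper: part (a) via the pointwise bound $\hld(f)\le\norm{f}{L^\infty}$ together with the appendix extension, and part (b) via the weighted Fefferman--Stein inequality for $u\in A_\infty$, the estimate \eqref{eq:maxineq}, H\"older's inequality with exponents $p/r$ and $q/r$, the weighted boundedness of $\hld$, and a density argument. Your extra attention to the a priori finiteness needed for Fefferman--Stein (the paper handles this implicitly by citing the Cruz-Uribe--Martell--P\'erez extrapolation version and working first with $f,g\in L^\infty_c(\rn)$) is a welcome refinement, not a deviation.
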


\begin{remark}
If $T$ is a strongly singular bilinear Calder\'on--Zygmund operator that satisfies \eqref{eq:kernelLip}, then, besides the results from Theorem~\ref{thm:boundgral}, $T$ satisfies \eqref{eq:maxineq} and the weighted estimates stated in item  \eqref{item:b} of Corollary~\ref{coro:weights}.  If $T$ satisfies \eqref{eq:kernelLip} and its symmetric counterparts, then \eqref{eq:maxineq} and  item  \eqref{item:b} of Corollary~\ref{coro:weights} also hold for  $T^{*1}$ and $T^{*2}.$
\end{remark}

We briefly comment on the proof of Corollary~\ref{coro:weights} and then prove Theorem~\ref{thm:improvement}.
Note that   item \eqref{item:a} of Corollary~\ref{coro:weights} follows from Theorem~\ref{thm:boundBMO}, but of course, it can be inferred from \eqref{eq:maxineq}: such inequality gives that $T_\sigma$ is bounded from $L^\infty_c(\rn)\times L^\infty_c(\rn)$ into $BMO(\rn),$ which in turn implies  that $T_\sigma$ can be extended to a bounded operator from $L^\infty(\rn)\times L^\infty(\rn)$ into $BMO(\rn)$ as shown in Appendix~\ref{app}. For   \eqref{item:b} we use the  weighted Fefferman--Stein inequality (see Fefferman--Stein~\cite{MR0447953} and Cruz-Uribe--Martell--P\'erez~\cite[Theorem 1.3]{MR2078632}) to get that
\begin{equation*}
\norm{T_\sigma(f,g)}{L^r_u}\le \norm{\hl(T_\sigma(f,g))}{L^r_u}\lesssim \norm{\ms(T_\sigma(f,g))}{L^r_u};
\end{equation*}
we then use \eqref{eq:maxineq}, H\"older's inequality and the boundedness properties in weighted Lebesgue spaces of $\hld$ to obtain
$$
\norm{T_\sigma(f,g)}{L^r_u}\lesssim \norm{f}{L^p_v}\norm{g}{L^q_w}\quad \forall f,g\in L^\infty_c(\rn).
$$
$T_\sigma$ can then be extended by density to a bounded operator from $L^p_v(\rn)\times L^q_w(\rn)$ into $L^r_u(\rn).$

\begin{proof}[Proof of Theorem~\ref{thm:improvement}] We follow  ideas from  \cite[Theorem 2.2]{MR3411149}.
We have to prove  that for all cubes $Q\subset \rn$ there is a constant $C_Q\in\com$ such that
\begin{equation*}
\dashint_Q\abs{T(f,g)(y)-C_Q}\,dy\lesssim \hld(f)(x)\hld(g)(x) \quad\forall x\in Q, f,g\in L^\infty_c(\rn),
\end{equation*}
where the implicit constant is independent of $Q.$

Fix  a  cube $Q\subset \rn$ of side length $d>0$ and center $\qc;$ let $\qtil$ be as in the proof of Theorem \ref{thm:boundBMO}. For $f, g\in L^\infty_c(\rn)$ consider
\[
T(f,g)=T(f\chi_{\qtil},g\chi_\qtil)+T(f\chi_\qtil,g\chi_{\qtil^c})+T(f\chi_{\qtil^c},g\chi_\qtil)+T(f\chi_{\qtil^c},g\chi_{\qtil^c}).
\]

The term $T(f\chi_\qtil,g\chi_\qtil)$ is controlled using the boundedness of $T$ in the same way done in the proof of  Theorem~\ref{thm:boundBMO}.
If $d\leq 1$, we apply H\"older's inequality and the boundedness of $T$ from $L^2(\rn)\times L^2(\rn)$ into $L^{{1}/{\expo}}(\rn)$ to get that for all $x\in  Q$
\begin{align*}
\dashint_Q\abs{T(f\chi_\qtil,g\chi_\qtil)(v)}\,dv&\lesssim  \left(\dashint_\qtil\abs{f(y)}^2\,dy\right)^{\frac{1}{2}} \left(\dashint_\qtil\abs{g(z)}^2\,dz\right)^{\frac{1}{2}}&\lesssim \hld(f)(x)\,\hld(g)(x).
\end{align*}
When $d>1$ we proceed similarly using the boundedness of $T$ from $L^2(\rn)\times L^2(\rn)$ into $L^1(\rn)$, which implies
\begin{align*}
\dashint_Q\abs{T(f\chi_\qtil,g\chi_\qtil)(v)}\,dv
&\lesssim \hld(f)(x)\,\hld(g)(x)\quad \forall x\in Q.
\end{align*}

For the terms $T(f\chi_\qtil,g\chi_{\qtil^c}),$ $T(f\chi_{\qtil^c},g\chi_\qtil)$ and $T(f\chi_{\qtil^c},g\chi_{\qtil^c}),$ we will  prove that
\begin{align*}
\dashint_Q\abs{T(f\chi_\qtil,g\chi_{\qtil^c})(v)-C_{Q,1}}\,dv
&\lesssim  \hld(f)(x)\,\hld(g)(x)\quad \forall x\in Q,\\
\dashint_Q\abs{T(f\chi_{\qtil^c}g\chi_\qtil)(v)-C_{Q,2}}\,dv
&\lesssim  \hld(f)(x)\,\hld(g)(x)\quad \forall x\in Q,\\
\dashint_Q\abs{T(f\chi_{\qtil^c}g\chi_{\qtil^c})(v)-C_{Q,3}}\,dv
&\lesssim  \hld(f)(x)\,\hld(g)(x)\quad \forall x\in Q,
\end{align*}
where $C_{Q,1}, C_{Q,2}, C_{Q,3}$ are as in the proof of Theorem~\ref{thm:boundBMO}.

Define $\mathcal{L}_Q(v,y,z)={\mathcal{K}}(v,y,z)-{\mathcal{K}}(\qc,y,z);$ set  $t=\expo$ if $d<1$ and $t=1$ if $d>1.$ For $v,x\in Q$ we have that
\begin{align*}
&\abs{T(f\chi_\qtil, g\chi_{\qtil^c})(v)-C_{Q,1}}\lesssim\, \sum_{k=1}^\infty\mathop{\int_{y\in\qtil}}_{\abs{z-\qc}\sim (2^k d)^{t}}\abs{\mathcal{L}_Q(v,y,z)}\abs{f(y)}\abs{g(z)}\,dydz\\
&\lesssim  \sum_{k=1}^\infty\left(\mathop{\int_{y\in\qtil}}_{\abs{z-\qc}\sim (2^k d)^{t}}\abs{\mathcal{L}_Q(v,y,z)}^2\,dydz\right)^{\frac{1}{2}} \left(\int_\qtil\abs{f(y)}^2\,dy \right)^{\frac{1}{2}}  \left(\mathop{\int}_{\abs{z-\qc}\sim (2^k d)^{t}}\abs{g(z)}^2\,dz \right)^{\frac{1}{2}}\\
&\lesssim  \,\hld(f)(x)\,\hld(g)(x) \sum_{k=1}^\infty d^{n t} 2^{k\frac{n}{2}t} \left(\mathop{\int_{y\in\qtil}}_{\abs{z-\qc}\sim (2^k d)^{t}}\abs{\mathcal{L}_Q(v,y,z)}^2\,dydz\right)^{\frac{1}{2}}.
\end{align*}
We next show that the sum above is bounded by a constant independent of $d,$ $v$ and $Q.$ Indeed, we have
\begin{align*}
\mathop{\int_{y\in\qtil}}_{\abs{z-\qc}\sim (2^k d)^{t}}\abs{\mathcal{L}_Q(v,y,z)}^2\,dydz&\lesssim \mathop{\int_{y\in\qtil}}_{\abs{z-\qc}\sim (2^k d)^{t}}\left(\frac{\abs{v-\qc}^s}{(\abs{\qc-y}+\abs{\qc-z})^{2n+\frac{s}{\expo}}}\right)^2\,dydz\\
&\sim \abs{v-\qc}^{2s} |\qtil|\, (2^k d)^{-2t(2n+\frac{s}{\expo})+tn}\\
&\lesssim d^{2s} d^{tn} (2^k d)^{-2t(2n+\frac{s}{\expo})+tn},
\end{align*}
where in the first inequality we have used that $\abs{z-\qc}\sim (2^k d)^{t}$ implies that $\abs{v-\qc}^\expo\lesssim \abs{\qc-y}+\abs{\qc-z}$ since $v\in Q$ and in the third inequality we have used that $v\in Q$  and the definition of $\qtil.$ Then, the sum above is controlled by
\[
 \sum_{k=1}^\infty d^{n t} 2^{k\frac{n}{2}t} ( d^{2s} d^{tn} (2^k d)^{-2t(2n+\frac{s}{\expo})+tn})^{\frac{1}{2}}= \sum_{k=1}^\infty  d^{s(1-\frac{t}{\expo})} 2^{-kt(n+\frac{s}{\expo})}\le \sum_{k=1}^\infty   2^{-kt(n+\frac{s}{\expo})} <\infty.
\]
We have then obtained that
\begin{align*}
\abs{T(f\chi_\qtil, g\chi_{\qtil^c})(v)-C_{Q,1}}\lesssim \hld(f)(x)\,\hld(g)(x)\quad \forall v,x\in Q.
\end{align*}
Integrating in $v$ over $Q$ it follows that
\begin{align*}
\dashint_Q\abs{T(f\chi_\qtil, g\chi_{\qtil^c})(v)-C_{Q,1}}\,dv\lesssim \hld(f)(x)\,\hld(g)(x)\quad \forall x\in Q,
\end{align*}
as desired. The term $T(f\chi_{\qtil^c}, g\chi_\qtil)(v)$ is treated analogously.  For the term $T(f\chi_{\qtil^c}, g\chi_{\qtil^c})$ we proceed  similarly as above and get that if  $v,x\in Q$ then
\begin{align*}
&\abs{T(f\chi_{\qtil^c}, g\chi_{\qtil^c})(v)-C_{Q,3}} \lesssim \,\sum_{k=1}^\infty\sum_{j=1}^\infty\mathop{\int_{\abs{y-\qc}\sim (2^j d)^{t}}}_{\abs{z-\qc}\sim (2^k d)^{t}}\abs{\mathcal{L}_Q(v,y,z)}\abs{f(y)}\abs{g(z)}\,dydz\\
&\lesssim \hld(f)(x)\,\hld(g)(x) \sum_{k=1}^\infty \,\sum_{j=1}^\infty d^{nt} 2^{(j+k)\frac{n}{2}t} \left(\mathop{\int_{\abs{y-\qc}\sim (2^j d)^{t}}}_{\abs{z-\qc}\sim (2^k d)^{t}}\abs{\mathcal{L}_Q(v,y,z)}^2\,dydz\right)^{\frac{1}{2}}.
\end{align*}
This last sum is finite and controlled by a constant independent of $d,$ $v$ and $Q$ as the following computation shows. We have
\begin{align*}
&\mathop{\int_{\abs{y-\qc}\sim (2^j d)^{t}}}_{\abs{z-\qc}\sim (2^k d)^{t}}\abs{\mathcal{L}_Q(v,y,z)}^2\,dydz\lesssim \mathop{\int_{\abs{y-\qc}\sim (2^j d)^{t}}}_{\abs{z-\qc}\sim (2^k d)^{t}}\left(\frac{\abs{v-\qc}^s}{(\abs{\qc-y}+\abs{\qc-z})^{2n+\frac{s}{\expo}}}\right)^2\,dydz\\
&\quad\quad\sim \abs{v-\qc}^{2s} \left(\int_{\abs{y-\qc}\sim (2^j d)^{t}}\frac{1}{\abs{\qc-y}^{2n+\frac{s}{\expo}}}\,dy\right)
\left(\int_{\abs{z-\qc}\sim (2^k d)^{t}}\frac{1}{\abs{\qc-z}^{2n+\frac{s}{\expo}}}\,dz\right)\\
&\quad\quad\lesssim d^{2s} (2^j d)^{-t(n+\frac{s}{\expo})} (2^k d)^{-t(n+\frac{s}{\expo})},
\end{align*}
which gives that the sum above is controlled by
\begin{align*}
\sum_{k=1}^\infty \,\sum_{j=1}^\infty d^{nt} 2^{(j+k)\frac{n}{2}t} (d^{2s} (2^j d)^{-t(n+\frac{s}{\expo})} (2^k d)^{-t(n+\frac{s}{\expo})})^{\frac{1}{2}}&=\sum_{k=1}^\infty \,\sum_{j=1}^\infty d^{s(1-\frac{t}{\expo})} 2^{-j\frac{ts}{2\expo}} 2^{-k\frac{ts}{2\expo}}\\
&\le \sum_{k=1}^\infty \,\sum_{j=1}^\infty
 2^{-j\frac{ts}{2\expo}} 2^{-k\frac{ts}{2\expo}}<\infty.
\end{align*}
We then obtain that
\begin{align*}
\abs{T(f\chi_{\qtil^c}, g\chi_{\qtil^c})(v)-C_{Q,3}}\lesssim \hld(f)(x)\,\hld(g)(x)\quad \forall v,x\in Q,
\end{align*}
and integrating in $v$ over $Q$ gives the expected inequality.

\end{proof}

\section{The critical bilinear H\"ormander class $BS_{\rho, \delta}^{-n(1-\rho)}$}\label{sec:critical}

In this section, we show that if  $\sigma\in BS^{-n(1-\rho)}_{\rho,\delta}$ with $0\le \delta\le \rho$ and $0<\rho<1$ then the kernel of $T_\sigma$ satisfies condition \eqref{eq:kernelcondx} with $\varepsilon=\rho$. By  the symbolic calculus of the bilinear H\"ormander classes proved in \cite[Theorem 2.1]{MR2660466},  \eqref{eq:kernelcondy} and  \eqref{eq:kernelcondz} also follow with $\varepsilon=\rho$. As a consequence, $T_\sigma$ has a strongly singular bilinear Calder\'on--Zygmund kernel.

The operator $T_\sigma$ is known to be bounded from $L^2(\rn)\times L^2(\rn)$ into $L^{1}(\rn)$ (see \cite{MR3205530} and references therein); it was shown in  \cite[Lemma 2.1]{MR3411149} that if the condition $0<\rho<1/2$ is further imposed, then $T_\sigma$ and its transposes are also bounded from $L^2(\rn)\times L^2(\rn)$ into $L^{{1}/{\rho}}(\rn).$  Therefore, $T_\sigma$ with such a symbol becomes a strongly singular bilinear Calder\'on--Zygmund operator. In particular, Theorem~\ref{thm:boundgral} applied to $T_\sigma$ recovers \cite[Theorem 1.1]{MR3411149}. Note that $T_\sigma$ does not necessarily satisfies the hypothesis of Theorem~\ref{thm:improvement} since its kernel  may fail to satisfy the stronger condition \eqref{eq:kernelLip}; however, the pointwise inequality \eqref{eq:maxineq} does hold for $T_\sigma$ as proved in \cite[Theorem 2.2]{MR3411149}. Therefore, not only does $T_\sigma$ satisfy all boundedness properties stated in Theorem~\ref{thm:boundgral} but it also verifies the weighted estimated from part \eqref{item:b} of Corollary~\ref{coro:weights}.

\medskip

Fix  $\sigma\in BS^{-n(1-\rho)}_{\rho,\delta}$ with  $0\le \delta\le \rho$ and $0< \rho<1.$ We next proceed to show that the kernel of $T_\sigma,$ which we denote by $K,$ satisfies \eqref{eq:kernelcondx} with $\varepsilon=\rho.$

We begin by showing that
\begin{equation}\label{eq:kernelcondxbig}
\sup_{|x-x'|\geq 1}\int_{R_\rho}\abs{K (x,y,z)-K (x',y,z)}\,dydz<\infty,
\end{equation}
where for fixed $x,x',$ we denote
$$R_\rho:=\{(y, z)\in \mathbb R^{2n}: 3\abs{x-x'}^{\rho}\leq \abs{x-y}+\abs{x-z}\};$$
let us also write
$$R:=\{(y, z)\in \mathbb R^{2n}: \abs{x-x'}\leq \abs{x-y}+\abs{x-z}\}.$$
Fix $x, x'\in\rn $ so that $|x-x'|\geq 1$. Using \cite[Theorem E (v)]{MR3205530}, we obtain that for $(y, z)\in R_\rho$ we have
\[
|K(x, y, z)|\lesssim (\abs{x-y}+\abs{x-z})^{-n(1+1/\rho)}.
\]
Furthermore, if $(y, z)\in R$, by \cite[Theorem E (vi)]{MR3205530} we also get
\[
|K(x, y, z)-K(x', y, z)|\lesssim |x-x'|^{\rho}(\abs{x-y}+\abs{x-z})^{-n(1+1/\rho)-1}.
\]

Note that $n(1+1/\rho)=2n+n(1-\rho)/\rho>2n$. We break down the left hand side of  \eqref{eq:kernelcondxbig} into two cases. In the first case, let us further assume that $|x-x'|\leq 3^{1/(1-\rho)}$. Then, $R_\rho\subset R$ and we can write
\begin{align*}
\int_{R_\rho}\abs{K (x,y,z)-K (x',y,z)}\,dydz&\lesssim \int_{R_\rho}|x-x'|^\rho (\abs{x-y}+\abs{x-z})^{-n(1+1/\rho)-1}\,dydz\\
&\lesssim \int_{R_\rho}(\abs{x-y}+\abs{x-z})^{-n(1+1/\rho)}\,dydz\\
&\lesssim |x-x'|^{-n(1-\rho)}\lesssim 1.
\end{align*}
Secondly, let us now assume that $|x-x'|>3^{1/(1-\rho)}$; in this case $R\subset R_\rho$. Write
\[
\int_{R_\rho}\abs{K (x,y,z)-K (x',y,z)}\,dydz=I+II,
\]
with $I=\int_R \abs{K (x,y,z)-K (x',y,z)}\,dydz$ and $II=\int_{R_\rho\setminus R} \abs{K (x,y,z)-K (x',y,z)}\,dydz$.
We estimate $I$ as follows:
\begin{align*}
I&\lesssim \int_R |x-x'|^{\rho}(\abs{x-y}+\abs{x-z})^{-n(1+1/\rho)-1}\,dydz\\
&\lesssim \int_R (\abs{x-y}+\abs{x-z})^{-n(1+1/\rho)-1+\rho}\,dydz\\
&\lesssim |x-x'|^{n(1-1/\rho)-1+\rho}\lesssim 1.
\end{align*}
We estimate  $II$ as follows:
\[
II\leq \int_{R_\rho\setminus R}|K(x, y, z)|\,dydz+\int_{R_\rho\setminus R}|K(x', y, z)|\,dydz.
\]
Now,
\begin{align*}
\int_{R_\rho\setminus R}|K(x, y, z)|\,dydz &\lesssim \int_{R_\rho}(|x-y|+|x-z|)^{-n(1+1/\rho)}\,dydz\lesssim |x-x'|^{-n(1-\rho)}\lesssim 1.
\end{align*}
Note also that, if $(y ,z)\in R_\rho\setminus R$, we have $|x-y|+|x-z|<|x-x'|$, hence
\[
|x'-y|+|x'-z|\geq 2|x-x'|-(|x-y|+|x-z|)>|x-x'|.
\]
Thus, by appealing again to \cite[Theorem E (v)]{MR3205530}, we can write
\begin{align*}
\int_{R_\rho\setminus R}|K(x', y, z)|\,dydz &\lesssim \int_{|x-x'|<|x'-y|+|x'-z|} (|x'-y|+|x'-z|)^{-n(1+1/\rho)}\,dydz\\
& \lesssim |x-x'|^{n(1-1/\rho)}\lesssim 1.
\end{align*}
This finishes the proof of \eqref{eq:kernelcondxbig}.

It remains to show that the following estimate also holds:
\begin{equation}\label{eq:kernelcondxsmall}
\sup_{0<|x-x'|<1}\int_{R_\rho}\abs{K (x,y,z)-K (x',y,z)}\,dydz<\infty.
\end{equation}
Note that the approach used for the case $|x-x'|>1$ using the pointwise estimates for $K$ does not work anymore since this  leads to negative powers of $|x-x'|.$ In order to prove  \eqref{eq:kernelcondxsmall} we follow the idea in \cite{MR849442}; see also \cite[Chapter VII, p. 322]{MR1232192}. Let $\varphi(\xi,\eta)$  and $\psi(\xi, \eta),$ $\xi,\eta\in\rn,$ be infinitely differentiable functions so that $\text{supp}\, \varphi\subset \{(\xi,\eta):  |\xi|+|\eta|\leq 2\},$ $\text{supp}\, \psi\subset \{(\xi,\eta): 1/2\leq |\xi|+|\eta|\leq 2\}$ and $\varphi(\xi,\eta)+\sum_{j=1}^\infty \psi (2^{-j}\xi, 2^{-j}\eta)=1$ for all $\xi,\eta\in\rn.$ Fix $x, x'\in\rn$ so that $|x-x'|<1$. For $j\ge 0,$ we write
\[
K_j (x, y, z)=\int_{\rtn} \sigma_j (x, \xi, \eta)e^{i(x-y)\cdot\xi}e^{i(x-z)\cdot\eta}\,d\xi d\eta,
\]
where
\[
\sigma_0 (x, \xi, \eta)=\sigma (x, \xi, \eta)\varphi (\xi, \eta)\quad \text{and}\quad \sigma_j (x, \xi, \eta)=\sigma (x, \xi, \eta)\psi (2^{-j}\xi, 2^{-j}\eta) \,\, \text{for }j\ge 1.
\]
It suffices to estimate
\[
\int_{R_\rho}\abs{K_j (x,y,z)-K_j (x',y,z)}\,dydz\lesssim c_j (x-x'),
\]
where the functions $c_j$ satisfy $\sum_{j=0}^\infty c_j(x-x')\lesssim 1$ uniformly in $x, x'$.

As a first instance, we estimate crudely
\[
\int_{R_\rho}\abs{K_j (x,y,z)-K_j (x',y,z)}\,dydz\leq \int_{R_\rho}\abs{K_j (x,y,z)}\,dydz+\int_{R_\rho}\abs{K_j (x',y,z)}\,dydz.
\]
Note that, since $|x-x'|^\rho\geq |x-x'|$, if $(y, z)\in R_\rho$, then
$$|x'-y|+|x'-z|\geq |x-y|+|x-z|-2|x-x'|\geq 3|x-x'|^\rho-2|x-x'|\geq |x-x'|^\rho.$$
Thus, it suffices to estimate $\int_{R_\rho}\abs{K_j (x,y,z)}\,dydz$.

Let $N\in 2\mathbb N$ be so that $N>n$. By using H\"older's inequality,  the integral of $\abs{K_j (x,y,z)}$ over $R_\rho$ is less than
\[
\|(|x-y|^2+|x-z|^2)^{N/2}K_j(x, y, z)\|_{L^2(dydz)}\|(|x-y|^2+|x-z|^2)^{-N/2}\chi_{_{R_\rho}}\|_{L^2(dydz)}.
\]
Using polar coordinates immediately gives that
\[
\|(|x-y|^2+|x-z|^2)^{-N/2}\chi_{_{R_\rho}}\|_{L^2(dydz)}\sim |x-x'|^{\rho(n-N)}.
\]
We now estimate the first $L^2$ norm above. Integration by parts gives
\begin{align*}
&(|x-y|^2+|x-z|^2)^{N/2}K_j(x, y, z)=(-1)^{N/2}\int_{\rtn} \Delta^{N/2}_{\xi, \eta}\sigma_j(x, \xi, \eta)e^{i(x-y)\cdot\xi+i(x-z)\cdot\eta}\,d\xi d\eta\\
&=\sum_{|\alpha_1+\alpha_2|=N} c_{\alpha_1, \alpha_2}\int_{\rtn} \partial_{\xi, \eta}^{\alpha_1}\sigma (x, \xi, \eta)2^{-j|\alpha_2|}(\partial_{\xi, \eta}^{\alpha_2}\psi)(2^{-j}\xi, 2^{-j}\eta)e^{i(x-y)\cdot\xi+i(x-z)\cdot\eta}\,d\xi d\eta,
\end{align*}
where $\psi$ should be replaced by $\varphi$ if $j=0.$ Now, using Plancherel's theorem, the conditions on the symbol $\sigma$ and the supports of $\varphi$ and $\psi$, we can estimate
\begin{align*}
\|(|x-y|^2+|x-z|^2)^{N/2}K_j(x, y, z)\|_{L^2(dydz)}&\lesssim \sum_{|\alpha_1+\alpha_2|=N}(2^j)^{n(\rho-1)-\rho|\alpha_1|}2^{-j\rho|\alpha_2|}2^{jn}\\&\sim 2^{j\rho(n-N)}.
\end{align*}
We have thus obtained  that
\begin{equation}
\label{firstestimate}
\int_{R_\rho}\abs{K_j (x,y,z)-K_j (x',y,z)}\,dydz\lesssim (2^j|x-x'|)^{\rho(n-N)}\quad \forall j\in\na_0.
\end{equation}

It turns out that we can improve the estimate \eqref{firstestimate} if we further require $2^{j}|x-x'|\leq 1$; we show this next. As in the above calculation, we let $N\in 2\mathbb N$ be so that $N>n$ and apply H\"older's inequality to get
$$\int_{R_\rho}\abs{K_j (x,y,z)-K_j (x',y,z)}\,dydz\leq I_1I_2,$$ where
\[
I_1:=\|(1+2^{j\rho N}(|x-y|^2+|x-z|^2)^{N/2})[K_j(x, y, z)-K_j (x', y, z)]\|_{L^2(dydz)}
\]
and
\[
I_2:=\|(1+2^{j\rho N}(|x-y|^2+|x-z|^2)^{N/2})^{-1}\chi_{_{R_\rho}}\|_{L^2(dydz)}.
\]
Using a change of variables we see that
\[
I_2\lesssim \Big(\int_{\rtn}(1+|y|^2+|z|^2)^{-N}2^{-2jn\rho}\,dydz\Big)^{\frac{1}{2}}\sim 2^{-jn\rho}.
\]
We estimate $I_1$ next. Observe first that
\[
K_j(x, y, z)-K_j (x', y, z)=I+II,
\]
where
\[
I=\int_{\rtn} \sigma_j(x', \xi, \eta)e^{i(x-y)\cdot \xi+i(x-z)\cdot \eta}(1-e^{i(x'-x)\cdot (\xi+\eta)})\,d\xi d\eta
\]
and
\[
II=\int_{\rtn}(\sigma_j (x, \xi, \eta)-\sigma_j (x', \xi, \eta))e^{i(x-y)\cdot\xi+i(x-z)\cdot\eta}\,d\xi d\eta.
\]
Therefore, we have $I_1\leq I_{1, 1}+I_{1, 2}$, with
\[
I_{1, 1}=\|(1+2^{j\rho N}(|x-y|^2+|x-z|^2)^{N/2})I\|_{L^2(dydz)}
\]
and
\[
I_{1, 2}=\|(1+2^{j\rho N}(|x-y|^2+|x-z|^2)^{N/2})II\|_{L^2(dydz)}.
\]
Integration by parts gives that $(1+2^{j\rho N}(|x-y|^2+|x-z|^2)^{N/2})I$ equals
\[
\int_{\rtn} (1+ (-1)^{N/2} 2^{j\rho N} \Delta_{\xi, \eta}^{N/2})[\sigma_j(x', \xi, \eta)(1-e^{i(x'-x)\cdot (\xi+\eta)})]e^{i(x-y)\cdot \xi+i(x-z)\cdot \eta}\,d\xi d\eta.
\]
Now, since $2^j|x-x'|\leq 1$ and $|\xi|+|\eta|\sim 2^j$ ($\lesssim 1$ if $j=0$) in the support of $\sigma_j$, we have
$$|1-e^{i(x'-x)\cdot(\xi+\eta)}|\leq |x-x'||\xi+\eta|\lesssim 2^j|x-x'|\leq (2^j|x-x'|)^\rho.$$
The above and the fact that $\sigma\in BS^{-n(1-\rho)}_{\rho, \delta}$ leads to
$$|\sigma_j (x', \xi, \eta)(1-e^{i(x'-x)\cdot(\xi+\eta)})|\lesssim 2^{-jn(1-\rho)} (2^j |x-x'|)^\rho.$$
Moreover,  $2^{j\rho N}\Delta_{\xi, \eta}^{N/2}[\sigma_j(x', \xi, \eta)(1-e^{i(x'-x)\cdot (\xi+\eta)})]$ is given by
\begin{align*}
&2^{j\rho N}\mathop{\sum_{|\alpha_1+\alpha_2+\alpha_3|=N}}_{|\alpha_3|>0} c_{\alpha_1, \alpha_2, \alpha_3}\partial_{\xi, \eta}^{\alpha_1}\sigma (x', \xi, \eta)2^{-j|\alpha_2|}(\partial_{\xi, \eta}^{\alpha_2}\psi)(2^{-j}\xi, 2^{-j}\eta)(x-x')^{\alpha_3}e^{i(x'-x)\cdot (\xi+\eta)}\\
&+2^{j\rho N}\sum_{|\alpha_1+\alpha_2|=N} c_{\alpha_1, \alpha_2, \alpha_3}\partial_{\xi, \eta}^{\alpha_1}\sigma (x', \xi, \eta)2^{-j|\alpha_2|}(\partial_{\xi, \eta}^{\alpha_2}\psi)(2^{-j}\xi, 2^{-j}\eta)(1-e^{i(x'-x)\cdot (\xi+\eta)}),
\end{align*}
where $\psi$ should be replaced by $\varphi$ if $j=0.$
 Since $0<\rho< 1$ and $2^j\abs{x-x'}\le 1$ we can control
\begin{align*}
|2^{j\rho N}\Delta_{\xi, \eta}^{N/2}&[\sigma_j(x', \xi, \eta)(1-e^{i(x'-x)\cdot (\xi+\eta)})]|\\
\lesssim& \,2^{j\rho N}\mathop{\sum_{|\alpha_1+\alpha_2+\alpha_3|=N}}_{|\alpha_3|>0} (2^j)^{-n(1-\rho)-\rho |\alpha_1|}2^{-j\rho |\alpha_2|}(2^j|x-x'|)^{|\alpha_3|}2^{-j\rho |\alpha_3|}\\
&+2^{j\rho N}\sum_{|\alpha_1+\alpha_2|=N}  (2^j)^{-n(1-\rho)-\rho |\alpha_1|}2^{-j\rho |\alpha_2|}(2^j|x-x'|)^\rho\\
\lesssim& \,(2^j|x-x'|)^\rho 2^{-jn(1-\rho)}.
\end{align*}
The previous estimates imply that
\[
|(1+ (-1)^{N/2}2^{j\rho N}\Delta_{\xi, \eta}^{N/2})[\sigma_j(x', \xi, \eta)(1-e^{(x'-x)(\xi+\eta)})]|\lesssim (2^j|x-x'|)^\rho 2^{-jn(1-\rho)};
\]
thus, by using Plancherel's theorem, we conclude that
\[
I_{1, 1}\lesssim (2^j|x-x'|)^\rho 2^{-jn(1-\rho)}2^{jn}\sim (2^j|x-x'|)^\rho 2^{jn\rho}.
\]
A similar argument shows that $I_{1, 2}\lesssim (2^j|x-x'|)^\rho 2^{jn\rho}$; the difference is that now we control
\begin{align*}
|(1+(-1)^{N/2}2^{j\rho N}\Delta_{\xi, \eta}^{N/2})(\sigma_j (x, \xi, \eta)-\sigma_j(x', \xi, \eta))|&\lesssim |x-x'|(2^j)^{-n(1-\rho)+\delta}\\
&\leq (2^j|x-x'|)^\rho 2^{-jn(1-\rho)}.
\end{align*}
All in all, we have shown that
$
I_1\lesssim (2^j|x-x'|)^\rho 2^{jn\rho},
$
which gives, for $2^j|x-x'|<1$, the improved estimate
\begin{equation}
\label{secondestimate}
\int_{R_\rho}\abs{K_j (x,y,z)-K_j (x',y,z)}\,dydz\lesssim (2^j|x-x'|)^\rho 2^{jn\rho}2^{-jn\rho}\sim (2^j|x-x'|)^{\rho}.
\end{equation}

Letting $c_j (x-x'):=\min\{(2^j|x-x'|)^{\rho}, (2^j|x-x'|)^{\rho (n-N)}\}$ and combining \eqref{firstestimate} and \eqref{secondestimate} gives
\[
\int_{R_\rho}\abs{K_j (x,y,z)-K_j (x',y,z)}\,dydz\lesssim c_j(x-x')\quad \forall j\in\na_0.
\]
Now,  let $j_0\geq 0$ be so that $2^{j_0}|x-x'|<1$ and $2^{j_0+1}|x-x'|\geq 1$ and notice that
\[
\sum_{j=0}^\infty c_j(x-x')\leq \sum_{j=0}^{j_0}2^{(j-j_0)\rho}+\sum_{j=j_0+1}^\infty 2^{(j-j_0-1)\rho (n-N)}\lesssim 1,
\]
with the implicit constant independent of $x$ and $x',$ finishing the proof of \eqref{eq:kernelcondxsmall}.

\appendix

\section{Defining $T$ on $L^\infty(\rn)\times L^\infty(\rn)$}\label{app}

The purpose of this appendix is to indicate how an operator $T$ satisfying the hypothesis of Theorem~\ref{thm:boundBMO} can be extended to a bounded operator from $L^\infty(\rn)\times L^\infty(\rn)$ into $BMO$. Our discussion is inspired by the one in Duoandikoetxea \cite[pp. 119-120]{MR1800316}.

Fix $f, g\in L^\infty(\rn).$ Let $Q\subset \rn$ be a cube centered at the origin which contains  $x$ and  let $\qtil$ be the cube containing $Q$ introduced in the proof of Theorem \ref{thm:boundBMO}. We define
\begin{equation}
\label{Qdef}
T_Q(f, g)(x)=T(f\chi_{\qtil}, g\chi_{\qtil})(x)+I_{1,Q}(f, g)(x)+I_{2,Q}(f, g)(x)+I_{3,Q}(f, g)(x),
\end{equation}
where
\begin{align*}
I_{1,Q}(f, g)(x)&=\int_{\rtn}(\mathcal K(x, y, z)-\mathcal K(0, y, z))f(y)\chi_{\qtil}(y)g(z)\chi_{\qtil^c}(z)\,dydz,\\
I_{2,Q}(f, g)(x)&=\int_{\rtn}(\mathcal K(x, y, z)-\mathcal K(0, y, z))f(y)\chi_{\qtil^c}(y)g(z)\chi_{\qtil}(z)\,dydz,\\
I_{3,Q}(f, g)(x)&=\int_{\rtn}(\mathcal K(x, y, z)-\mathcal K(0, y, z))f(y)\chi_{\qtil^c}(y)g(z)\chi_{\qtil^c}(z)\,dydz.
\end{align*}
We note that $T(f\chi_{\qtil}, g\chi_{\qtil})$ is well-defined since $f\chi_\qtil,g\chi_\qtil\in L^\infty_c(\rn)$ and that  the terms $I_{j,Q}, j=1, 2, 3$, are defined through absolutely convergent integrals. Indeed, for  $0<\varepsilon<1$ as in \eqref{eq:kernelcondx}, we have
\begin{align*}
|I_{j,Q}(f, g)(x)|&\lesssim \int_{|x|^\varepsilon\lesssim |x-y|+|x-z|}|\mathcal K(x, y, z)-\mathcal K(0, y, z)|\,dydz\,\,\,\|f\|_{L^\infty}\|g\|_{L^\infty}\\
&\lesssim \|f\|_{L^\infty}\|g\|_{L^\infty}.
\end{align*}

We show next that given cubes $Q, R$ centered at the origin and containing $x$, the definitions of $T_Q(f, g)(x)$ and $T_R(f, g)(x)$ are the same modulo a constant independent of $x$. Without loss of generality, let us assume that $Q\subset R$; in particular, this gives $\qtil\subset \rtil$ as well. Using Remark~\ref{re:Tpointwise}, a straightforward calculation now shows that $T_Q(f, g)(x)-T_R(f, g)(x)$ equals
\begin{align*}
&-T(f\chi_{\rtil\setminus \qtil}, g\chi_{\qtil})-T(f\chi_{\rtil}, g\chi_{\rtil\setminus \qtil})\\
&+\int_{\rtn}(\mathcal K(x, y, z)-\mathcal K(0, y, z))f(y)g(z)\chi_{(\qtil\times\qtil)^c}(y, z)\,dydz\\
&-\int_{\rtn}(\mathcal K(x, y, z)-\mathcal K(0, y, z))f(y)g(z)\chi_{(\rtil\times\rtil)^c}(y, z)\,dydz\\
=&-\int_{\rtn}\mathcal K(x, y, z)f(y)g(z)(\chi_{\rtil\setminus \qtil}(y)\chi_{\qtil}(z)+\chi_{\rtil}(y)\chi_{\rtil\setminus \qtil}(z))\,dydz\\
&+\int_{\rtn}\mathcal K(x, y, z)f(y)g(z)(\chi_{(\qtil\times\qtil)^c}-\chi_{(\rtil\times\rtil)^c})(y, z)\,dydz\\
&-\int_{\rtn}\mathcal K(0, y, z)f(y)g(z)(\chi_{(\qtil\times\qtil)c}-\chi_{(\rtil\times\rtil)^c})(y, z)\,dydz\\
=&-\int_{\rtil\times\rtil\setminus \qtil\times\qtil}\mathcal K(0, y, z)f(y)g(z)\, dydz.
\end{align*}
This gives the desired result since the last integral is independent of $x$ and, because $\mathcal K$ is locally integrable on $\mathbb R^{3n}\setminus \Delta$ and $f,g\in L^\infty(\rn),$ we can assume without loss of generality that it is absolutely convergent. Thus, as a function in $BMO$, we can define   $T(f, g)$ for $f, g\in L^\infty(\rn)$ via the right hand-side of \eqref{Qdef}.

It remains to show that  $\|T(f, g)\|_{BMO}\lesssim \|f\|_{L^\infty}\|g\|_{L^\infty}$. Let $Q\subset \rn$ be an arbitrary cube and let $R\subset \rn$ be some cube centered at the origin such that $Q\subset R$. For $x\in Q$, we can then  write $T(f, g)(x)=T_R (f, g)(x)$. By the proof of Theorem \ref{thm:boundBMO},  since $f\chi_{\rtil}, g\chi_{\rtil}\in L^\infty_c(\rn)$,  there is some constant $C_Q$ such that
$$\dashint_Q|T(f\chi_{\rtil}, g\chi_{\rtil})(x)-C_Q|\,dx\lesssim \|f\|_{L^\infty}\|g\|_{L^\infty},$$
with the implicit constant independent of $Q$. Moreover, it was shown above that $|I_{j,R}(f, g)(x)|\lesssim \|f\|_{L^\infty}\|g\|_{L^\infty}$ for $x\in R$ and $j=1,2,3;$ then we have
\[
\dashint_Q\abs{I_{j,R}(f,g)(x)}\,dx\lesssim \|f\|_{L^\infty}\|g\|_{L^\infty}\quad \text{for } j=1,2,3.
\]
Altogether, we have that  
\[
\dashint_{Q}|T(f, g)(x)-C_Q|\,dx\lesssim \|f\|_{L^\infty}\|g\|_{L^\infty},
\]
with the implicit constant independent of $Q.$ This gives the desired conclusion.




\begin{thebibliography}{99}

\bibitem{MR849442}
J.~Alvarez and M.~Milman.
\newblock {$H^p$} continuity properties of {C}alder\'on-{Z}ygmund-type
  operators.
\newblock {\em J. Math. Anal. Appl.}, 118(1):63--79, 1986.

\bibitem{MR3205530}
{\'A}.~B{\'e}nyi, F.~Bernicot, D.~Maldonado, V.~Naibo, and R.~H. Torres.
\newblock On the {H}\"ormander classes of bilinear pseudodifferential operators
  {II}.
\newblock {\em Indiana Univ. Math. J.}, 62(6):1733--1764, 2013.

\bibitem{MR2660466}
{\'A}.~B{\'e}nyi, D.~Maldonado, V.~Naibo, and R.~H. Torres.
\newblock On the {H}\"ormander classes of bilinear pseudodifferential
  operators.
\newblock {\em Integral Equations Operator Theory}, 67(3):341--364, 2010.

\bibitem{MR1986065}
{\'A}.~B{\'e}nyi and R.~H. Torres.
\newblock Symbolic calculus and the transposes of bilinear pseudodifferential
  operators.
\newblock {\em Comm. Partial Differential Equations}, 28(5-6):1161--1181, 2003.

\bibitem{MR2046194}
{\'A}.~B{\'e}nyi and R.~H. Torres.
\newblock Almost orthogonality and a class of bounded bilinear
  pseudodifferential operators.
\newblock {\em Math. Res. Lett.}, 11(1):1--11, 2004.

\bibitem{MR3750234}
J.~Brummer and V.~Naibo.
\newblock Bilinear operators with homogeneous symbols, smooth molecules, and
  {K}ato-{P}once inequalities.
\newblock {\em Proc. Amer. Math. Soc.}, 146(3):1217--1230, 2018.

\bibitem{MR518170}
R.~Coifman and Y.~Meyer.
\newblock {\em Au del\`a des op\'erateurs pseudo-diff\'erentiels}, volume~57 of
  {\em Ast\'erisque}.
\newblock Soci\'et\'e Math\'ematique de France, Paris, 1978.
\newblock With an English summary.

\bibitem{MR2078632}
D.~Cruz-Uribe, J.~M. Martell, and C.~P\'erez.
\newblock Extrapolation from {$A_\infty$} weights and applications.
\newblock {\em J. Funct. Anal.}, 213(2):412--439, 2004.

\bibitem{MR1800316}
J.~Duoandikoetxea.
\newblock {\em Fourier analysis}, volume~29 of {\em Graduate Studies in
  Mathematics}.
\newblock American Mathematical Society, Providence, RI, 2001.
\newblock Translated and revised from the 1995 Spanish original by David
  Cruz-Uribe.

\bibitem{MR0257819}
C.~Fefferman.
\newblock Inequalities for strongly singular convolution operators.
\newblock {\em Acta Math.}, 124:9--36, 1970.

\bibitem{MR0447953}
C.~Fefferman and E.~M. Stein.
\newblock {$H^{p}$} spaces of several variables.
\newblock {\em Acta Math.}, 129(3-4):137--193, 1972.

\bibitem{MR1880324}
L.~Grafakos and R.~H. Torres.
\newblock Multilinear {C}alder\'on-{Z}ygmund theory.
\newblock {\em Adv. Math.}, 165(1):124--164, 2002.

\bibitem{MR3211086}
J.~Herbert and V.~Naibo.
\newblock Bilinear pseudodifferential operators with symbols in {B}esov spaces.
\newblock {\em J. Pseudo-Differ. Oper. Appl.}, 5(2):231--254, 2014.

\bibitem{MR3627725}
J.~Herbert and V.~Naibo.
\newblock Besov spaces, symbolic calculus, and boundedness of bilinear
  pseudodifferential operators.
\newblock In {\em Harmonic analysis, partial differential equations, complex
  analysis, {B}anach spaces, and operator theory. {V}ol. 1}, volume~4 of {\em
  Assoc. Women Math. Ser.}, pages 275--305. Springer, [Cham], 2016.

\bibitem{MR3750316}
K.~Koezuka and N.~Tomita.
\newblock Bilinear pseudo-differential operators with symbols in {$BS^m_{1,1}$}
  on {T}riebel-{L}izorkin spaces.
\newblock {\em J. Fourier Anal. Appl.}, 24(1):309--319, 2018.

\bibitem{MR3165300}
N.~Michalowski, D.~Rule, and W.~Staubach.
\newblock Multilinear pseudodifferential operators beyond
  {C}alder\'on-{Z}ygmund theory.
\newblock {\em J. Math. Anal. Appl.}, 414(1):149--165, 2014.

\bibitem{MR3179688}
A.~Miyachi and N.~Tomita.
\newblock Calder\'on-{V}aillancourt-type theorem for bilinear operators.
\newblock {\em Indiana Univ. Math. J.}, 62(4):1165--1201, 2013.

\bibitem{MiTo}
T.~Miyachi and N.~Tomita.
\newblock Bilinear pseudo-differential operators with exotic symbols.
\newblock {\em Ann. Inst. Fourier (Grenoble)}, 2017.

\bibitem{MiTo2}
T.~Miyachi and N.~Tomita.
\newblock Bilinear pseudo-differential operators with exotic symbols, {II}.
\newblock https://arxiv.org/abs/1801.06745, 2018.

\bibitem{MR3393696}
V.~Naibo.
\newblock On the bilinear {H}\"ormander classes in the scales of
  {T}riebel-{L}izorkin and {B}esov spaces.
\newblock {\em J. Fourier Anal. Appl.}, 21(5):1077--1104, 2015.

\bibitem{MR3411149}
V.~Naibo.
\newblock On the {$L^\infty\times L^\infty\rightarrow BMO$} mapping property
  for certain bilinear pseudodifferential operators.
\newblock {\em Proc. Amer. Math. Soc.}, 143(12):5323--5336, 2015.

\bibitem{NaTho}
V.~Naibo and A.~Thomson.
\newblock Bilinear {H}\"ormander classes of critical order and {L}eibniz-type
  rules in {B}esov and local {H}ardy spaces.
\newblock {\em Preprint}, 2017.

\bibitem{MR3035059}
S.~Rodr{\'{\i}}guez-L{\'o}pez and W.~Staubach.
\newblock Estimates for rough {F}ourier integral and pseudodifferential
  operators and applications to the boundedness of multilinear operators.
\newblock {\em J. Funct. Anal.}, 264(10):2356--2385, 2013.

\bibitem{MR1232192}
E.~Stein.
\newblock {\em Harmonic analysis: real-variable methods, orthogonality, and
  oscillatory integrals}, volume~43 of {\em Princeton Mathematical Series}.
\newblock Princeton University Press, Princeton, NJ, 1993.
\newblock With the assistance of Timothy S. Murphy, Monographs in Harmonic
  Analysis, III.

\end{thebibliography}
\end{document}